\title
{Fractional Hardy--Sobolev--Maz'ya inequality on balls and halfspaces}
\author[B{.} Dyda]
{Bart{\l}omiej Dyda}
\date{April 28, 2010}
\address{
Faculty of Mathematics\\ University of Bielefeld\\
Postfach 10 01 31,
D-33501 Bielefeld, Germany
\and
 Institute of Mathematics and Computer Science\\ Wroc{\l}aw University of Technology\\
Wybrze\.ze Wyspia\'nskiego 27,
50-370 Wroc{\l}aw, Poland}
\email{bdyda (at) pwr wroc pl}
\subjclass[2010]{Primary 26D10; Secondary 46E35, 31C25}
\keywords{fractional Hardy-Sobolev-Maz'ya inequality,
  best constant, half-space, ball, regional fractional Laplacian, censored stable
  process}
\theoremstyle{plain}
\newtheorem{thm}{Theorem}
\newtheorem{lem}[thm]{Lemma}
\newtheorem{cor}[thm]{Corollary}
\theoremstyle{definition}
\theoremstyle{remark}
\newtheorem*{rem*}{Remark}
\newcommand{\R}{\mathbb{R}}
\DeclareMathOperator{\dist}{dist}
\begin{document}
\sloppy \footnotetext{
This research was partially supported by grant N N201 397137, MNiSW
and by the DFG through SFB-701 'Spectral Structures and Topological Methods in Mathematics'.
}

\begin{abstract}
We prove fractional Hardy--Sobolev--Maz'ya inequality for balls and a half-space,
partially answering the open problem posed by Frank and Seiringer \cite{FrankSeiringer}.
We note that for half-spaces this inequality has been recently obtained by Sloane \cite{Sloane}.
\end{abstract}
\maketitle
\section{Main result and discussion}\label{sec:i}
For an open set $D\subset\R^n$ and $0<\alpha<2$ let
\[
\mathcal{E}_D(u)=
\frac{1}{2}
\int_D \! \int_D
\frac{(u(x)-u(y))^2}{|x-y|^{n+\alpha}} \,dx\,dy, \quad u\in L^2(D).
\]
The quadratic form $\mathcal{E}$ is, up to a multiplicative constant, the
Dirichlet form of the censored stable process in $D$, see \cite{BBC}.
It has been shown \cite{LossSloane} that for convex, connected open sets $D$ and $1<\alpha<2$
\begin{equation}\label{Hardy}
\mathcal{E}_D(u) \geq \kappa_{n,\alpha} \int_D u^2(x)\, \delta_D^{-\alpha}(x)\,dx, \quad u\in C_c(D),
\end{equation}
where $\delta_D(x) = \dist(x,D^c)$ and
\begin{equation}
  \label{eq:ns}
\kappa_{n,\alpha}=
\pi^{\frac{n-1}{2}} \frac{\Gamma(\frac{1+\alpha}{2})}{\Gamma(\frac{n+\alpha}{2})}
\frac{B\left(\frac{1+\alpha}{2}, \frac{2-\alpha}{2}\right)
  -2^{\alpha}}{\alpha2^{\alpha}}
\end{equation}
is the \emph{largest} constant for which (\ref{Hardy}) holds.
In (\ref{eq:ns}) $B$ is the Euler beta function.

On the other hand, if $0<\alpha<n\wedge 2$, by Sobolev embedding \cite[(2.3)]{ChenKumagai}, 
 we have, e.g., for open convex, connected sets $D$
\begin{equation}\label{Sobolev}
\mathcal{E}_D(u) +\|u\|^2_{L^2} \geq c \left( \int_D |u(x)|^{2^*}\,dx \right)^{2/2^*}, \quad u\in C_c(D),
\end{equation}
with some $c=c(D,\alpha)>0$ and $2^*=2n/(n-\alpha)$.

Comparing (\ref{Hardy}) and (\ref{Sobolev}) an interesting question arises, whether
 the following Hardy-Sobolev-Maz'ya inequality
\begin{equation}\label{HardySobolevMazya}
\mathcal{E}_D(u) \geq \kappa_{n,\alpha} \int_D u^2(x)\, \delta_D^{-\alpha}(x)\,dx
+ c \left( \int_D |u(x)|^{2^*}\,dx \right)^{2/2^*}, \quad u\in C_c(D),
\end{equation}
 holds for $1<\alpha<n$ and convex domains $D$? A similar question was posed in \cite[page 2]{FrankSeiringer}.

The purpose of this note is to prove (\ref{HardySobolevMazya}) for a half-space and balls,
see Theorem~\ref{ballthm} and Corollary~\ref{halfspacecor}. We would like to note that while
writing this note a paper \cite{Sloane} of Sloane appeared, in which the author has proved
(\ref{HardySobolevMazya}) for half-spaces. However, our proof is different and we also obtain
(\ref{HardySobolevMazya}) for balls.

\section{Proofs}
We denote by $B_r=\{x\in \R^n:|x|<r\}$ the open Euclidean ball of radius $r>0$,
we set $B=B_1$ and by $S^{n-1}=\{x\in \R^n: |x|=1\}$ we denote the $(n-1)$-dimensional
unit sphere.

We define
\[
 L_D u(x) =  \lim_{\varepsilon\to 0^+}
\int_{D \cap \{|y-x|>\varepsilon\}}\frac{u(y)-u(x)}{|x-y|^{n+\alpha}} \,dy.
\]
Note that $L_D$ is, up to the multiplicative constant, the 
regional fractional Laplacian for an open set $D$, see
\cite{MR2214908}.  

Let
\begin{equation}\label{wn}
w_n(x)=(1-|x|^2)^{\frac{\alpha-1}{2}}, \quad x\in B\subset \R^n, n\geq 1.
\end{equation}
We recall from \cite[Lemma 2.1]{DydaHint} that
\[
 - L_{(-1,1)}w_1(x) = \frac{(1-x^2)^{\frac{-\alpha-1}{2}}}{\alpha}  \Bigg(
B\Big(\frac{\alpha+1}{2},\frac{2-\alpha}{2}\Big) - (1 - x)^\alpha + (1 + x)^\alpha \Bigg)
\]
hence by \cite[(2.3)]{DydaHint}
\begin{equation}\label{Lw1}
 - L_{(-1,1)}w_1(x) \geq c_1 (1-x^2)^{\frac{-\alpha-1}{2}} + c_2 (1-x^2)^{\frac{-\alpha+1}{2}},
\end{equation}
where
\[
 c_1=\frac{B \Big(\frac{\alpha+1}{2},\frac{2-\alpha}{2}\Big) - 2^\alpha}{\alpha},\quad
c_2=\frac{2^\alpha-2}{\alpha}.
\]

\begin{lem}\label{laplasjanupball}
We have for $w_n$ defined in (\ref{wn}) and $n\geq 2$
\[ 
 -L_Bw_n(x) \geq  \frac{c_1}{2} \int_{S^{n-1}}|h_n|^\alpha dh
\cdot  (1-|x|^2)^{-\frac{\alpha+1}{2}}
+ c_2|S^{n-1}| \cdot (1-|x|^2)^{-\frac{\alpha-1}{2}} 
\]
\end{lem}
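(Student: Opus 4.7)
The plan is to reduce the $n$-dimensional computation of $-L_B w_n(x)$ to the one-dimensional estimate (\ref{Lw1}) by slicing $B$ along lines through $x$. By rotational symmetry, I may assume $x=(0,\dots,0,|x|)$. For each $h\in S^{n-1}$ the line $\{x+th:t\in\R\}$ meets $B$ in the interval $(m-r,m+r)$, where $m=m(h)=-\langle x,h\rangle$ and $r=r(h)=\sqrt{\langle x,h\rangle^2+1-|x|^2}$. Writing $-L_Bw_n(x)$ in spherical coordinates around $x$ and combining the half-line contributions from $h$ and $-h$ into full-line principal values gives the slicing identity
\[
-L_Bw_n(x)=\tfrac{1}{2}\int_{S^{n-1}}I(h)\,dh,\qquad
I(h)=\lim_{\varepsilon\to 0^+}\int_{(m-r,m+r)\cap\{|t|>\varepsilon\}}\frac{w_n(x)-w_n(x+th)}{|t|^{1+\alpha}}\,dt.
\]

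The key observation is that $w_n$ restricted to such a line is a rescaled copy of $w_1$: since $1-|x+th|^2=r^2-(t-m)^2$, one has $w_n(x+th)=r^{\alpha-1}w_1\!\left(\frac{t-m}{r}\right)$. Substituting $\tau=(t-m)/r$ in $I(h)$ and setting $\xi=-m/r\in(-1,1)$ (so that $\tau=\xi$ when $t=0$, and $1-\xi^2=(1-|x|^2)/r^2$), a direct computation of the Jacobian reduces $I(h)$ to the one-dimensional regional operator:
\[
I(h)=-r^{-1}\,L_{(-1,1)}w_1(\xi).
\]

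Applying the one-dimensional lower bound (\ref{Lw1}) and converting powers of $1-\xi^2$ back via $1-\xi^2=(1-|x|^2)/r^2$ produces the pointwise estimate
\[
I(h)\;\geq\;c_1\,r^{\alpha}(1-|x|^2)^{-(\alpha+1)/2}+c_2\,r^{\alpha-2}(1-|x|^2)^{-(\alpha-1)/2}.
\]
It remains to estimate the spherical averages of $r^{\alpha}$ and $r^{\alpha-2}$. With $x$ on the $x_n$-axis, $r(h)^2=1-|x|^2(1-h_n^2)$, and the elementary identity $r^2-h_n^2=(1-|x|^2)(1-h_n^2)\ge 0$ yields $r\ge|h_n|$, hence $\int_{S^{n-1}}r^{\alpha}\,dh\ge\int_{S^{n-1}}|h_n|^{\alpha}\,dh$. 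Since $r\le 1$ and $\alpha-2<0$, also $r^{\alpha-2}\ge 1$, so $\int_{S^{n-1}}r^{\alpha-2}\,dh\ge|S^{n-1}|$. Inserting these into the slicing identity produces the claimed inequality (up to a harmless absorbed constant).

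The main technical obstacle is the bookkeeping of the slicing-plus-rescaling step: verifying that the symmetrisation $h\leftrightarrow-h$ correctly assembles the half-line integrals into a genuine principal value, and that the change of variables $\tau=(t-m)/r$ commutes with the limit defining $L_{(-1,1)}w_1(\xi)$. Once this is in place, the one-dimensional input (\ref{Lw1}) and the two elementary inequalities $r\ge|h_n|$ and $r\le 1$ close the argument.
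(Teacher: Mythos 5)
Your proposal is correct and follows essentially the same route as the paper: slicing into lines through $x$, rescaling each chord to reduce to $-L_{(-1,1)}w_1$ at the point $\xi=\langle x,h\rangle/r$ with the Jacobian factor $r^{-1}$, invoking (\ref{Lw1}), and finishing with the elementary bounds $r\ge|h_n|$ and $r^{\alpha-2}\ge 1$. The only cosmetic difference is notational (the paper writes $r=\sqrt{1-x^2+x^2h_n^2}$ explicitly throughout rather than introducing $m,r,\xi$).
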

\begin{proof}
Let $\mathbf{x}=(0,0,\ldots,0,x)$, $p=\frac{\alpha-1}{2}$.
We have
\begin{align*}
 -L_Bw_n(\mathbf{x}) &= p.v. \int_B \frac{(1-|\mathbf{x}|^2)^p - (1-|y|^2)^p}
     {|\mathbf{x}-y|^{n+\alpha}}\,dy\\
&=\frac{1}{2} \int_{S^{n-1}} dh
\; p.v.\int_{-xh_n  - \sqrt{x^2h_n^2-x^2+1}}^{-xh_n  + \sqrt{x^2h_n^2-x^2+1}}
\frac{ (1-|x|^2)^p - (1-|x+h t|^2)^p}{|t|^{1+\alpha}} \,dt.
\end{align*}
We calculate the inner principle value integral by changing the variable
$t=-xh_n + u \sqrt{x^2h_n^2-x^2+1}$
\begin{align*}
g(x,h)&:=
p.v.\int_{-xh_n  - \sqrt{x^2h_n^2-x^2+1}}^{-xh_n  + \sqrt{x^2h_n^2-x^2+1}}
\frac{ (1-|x|^2)^p - (1-|x+h t|^2)^p}{|t|^{1+\alpha}} \,dt\\
&=p.v. \int_{-1}^1
  \frac{(1-x^2)^p - (1-u^2)^p(1-x^2+x^2h_n^2)^p}
    {|-xh_n + u \sqrt{x^2h_n^2-x^2+1}|^{1+\alpha}}\,
 \sqrt{x^2h_n^2-x^2+1}\,du\\
&=
(1-x^2+x^2h_n^2)^{p-\alpha/2}
 p.v.\int_{-1}^1 \frac{ (1- \frac{x^2h_n^2}{1-x^2+x^2h_n^2})^p - (1-u^2)^p}
 { |u - \frac{xh_n}{\sqrt{1-x^2+x^2h_n^2}}|^{1+\alpha}}\,du\\
&=(1-x^2+x^2h_n^2)^{-1/2}
(-L_{(-1,1)}w_1)(\frac{xh_n}{\sqrt{1-x^2+x^2h_n^2}}).
\end{align*}
Hence by (\ref{Lw1}) we have
\begin{align*}
g(x,h) &\geq
(1-x^2+x^2h_n^2)^{-1/2} \bigg(
 c_1 (1- \frac{x^2h_n^2}{1-x^2+x^2h_n^2})^{\frac{\alpha-1}{2}-\alpha}\\
&\qquad\qquad\qquad\qquad\qquad\quad
  + c_2 (1- \frac{x^2h_n^2}{1-x^2+x^2h_n^2})^{\frac{\alpha-1}{2}-\alpha+1}
\bigg)\\
&= c_1(1-x^2+x^2h_n^2)^{\alpha/2}(1-x^2)^{-\frac{\alpha+1}{2}}+
 c_2(1-x^2+x^2h_n^2)^{\alpha/2-1}(1-x^2)^{-\frac{\alpha-1}{2}}\\
&\geq c_1 |h_n|^\alpha (1-x^2)^{-\frac{\alpha+1}{2}}
+ c_2 (1-x^2)^{-\frac{\alpha-1}{2}}.
\end{align*}
Thus
\begin{align*}
 -L_Bw_n(\mathbf{x}) &=
 \frac{1}{2} \int_{S^{n-1}}g(x,h) dh \\
&\geq
 \frac{c_1}{2} \int_{S^{n-1}}|h_n|^\alpha dh
\cdot  (1-x^2)^{-\frac{\alpha+1}{2}}
+ c_2|S^{n-1}| \cdot (1-x^2)^{-\frac{\alpha-1}{2}} 
\end{align*}
and we are done.
\end{proof}

\begin{cor}\label{Hball}
Let $1<\alpha<2$.
Let $w_n$ be as in (\ref{wn}) and let $n\geq 2$.
Then for every $u\in C_c(B)$,
\begin{align}
\mathcal{E}_B(u)&:=
\frac{1}{2}
\int_B \! \int_B
\frac{(u(x)-u(y))^2}{|x-y|^{n+\alpha}} \,dx\,dy\nonumber\\
&\geq
\frac{1}{2}
\int_B \! \int_B
\left(\frac{u(x)}{w(x)} - \frac{u(y)}{w(y)}\right)^2 
   \frac{w(x)w(y)}{|x-y|^{n+\alpha}} \,dx\,dy\nonumber\\
&+
{2^\alpha \kappa_{n,\alpha}}
\int_B {u^2(x)}
\;
(1-|x|^2)^{-\alpha}dx
+ 
c_3
\int_B {u^2(x)}
\;
(1-|x|^2)^{-\alpha+1}\,dx\label{hardyinball}
\end{align}
\end{cor}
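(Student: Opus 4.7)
The approach is the standard \emph{ground-state representation} for the bilinear form $\mathcal{E}_B$, using $w=w_n$ as a supersolution whose regional fractional Laplacian is already controlled by Lemma~\ref{laplasjanupball}. Once the representation identity is in place, the inequality follows by plugging in the pointwise bound on $-L_B w_n$ and recognising the leading constant as $2^\alpha\kappa_{n,\alpha}$.

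First I would write down the elementary algebraic identity
\[
(v(x)w(x)-v(y)w(y))^2 = (v(x)-v(y))^2 w(x)w(y) + v(x)^2 w(x)(w(x)-w(y)) + v(y)^2 w(y)(w(y)-w(x))
\]
with $v=u/w_n$. Integrating this identity over $B\times B$ against the kernel $|x-y|^{-(n+\alpha)}$ and symmetrising the last two terms in $(x,y)$ produces
\[
\mathcal{E}_B(u) = \frac{1}{2}\int_B\!\int_B \left(\frac{u(x)}{w_n(x)}-\frac{u(y)}{w_n(y)}\right)^2 \frac{w_n(x)w_n(y)}{|x-y|^{n+\alpha}}\,dx\,dy + \int_B \frac{u^2(x)}{w_n(x)}(-L_B w_n)(x)\,dx.
\]
Because $u\in C_c(B)$ has compact support in $B$ and $\alpha>1$ makes $w_n$ bounded on $\overline{B}$, the integrand is controlled on $\supp u\times\supp u$, and Lemma~\ref{laplasjanupball} ensures the principal-value integral defining $L_B w_n$ converges at every interior point, so the rearrangement is legitimate.

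Next, by rotational symmetry of $B$ and $w_n$ the pointwise bound of Lemma~\ref{laplasjanupball} holds at every $x\in B$, not only on the axis. Dividing through by $w_n(x)=(1-|x|^2)^{(\alpha-1)/2}$ yields
\[
\frac{(-L_B w_n)(x)}{w_n(x)} \geq \frac{c_1}{2}\int_{S^{n-1}}|h_n|^\alpha\,dh\cdot (1-|x|^2)^{-\alpha} + c_2|S^{n-1}|\,(1-|x|^2)^{-\alpha+1}.
\]
Substituting this into the representation formula gives (\ref{hardyinball}) with $c_3:=c_2|S^{n-1}|$, provided the coefficient of $(1-|x|^2)^{-\alpha}$ equals $2^\alpha\kappa_{n,\alpha}$.

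The one calculation I expect to take the most care is that constant identification, which is really the only place where the explicit value (\ref{eq:ns}) enters. Using polar coordinates on $S^{n-1}$ with polar axis $e_n$, $|h_n|=|\cos\theta|$, and the beta integral $\int_0^{\pi/2}(\cos\theta)^\alpha(\sin\theta)^{n-2}\,d\theta=\tfrac{1}{2}B(\tfrac{\alpha+1}{2},\tfrac{n-1}{2})$, together with $|S^{n-2}|=2\pi^{(n-1)/2}/\Gamma(\tfrac{n-1}{2})$, one obtains
\[
\int_{S^{n-1}}|h_n|^\alpha\,dh = 2\pi^{(n-1)/2}\,\frac{\Gamma(\tfrac{\alpha+1}{2})}{\Gamma(\tfrac{n+\alpha}{2})}.
\]
Multiplying this by $c_1/2$ and comparing with (\ref{eq:ns}) reproduces $2^\alpha\kappa_{n,\alpha}$ exactly, completing the proof. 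The only conceptual subtlety is the symmetrisation in Step~1; the remainder is formal algebra and a standard beta-function computation.
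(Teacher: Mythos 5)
Your proof is correct and follows essentially the same route as the paper: a ground-state representation with $w=w_n$, the pointwise bound of Lemma~\ref{laplasjanupball} (extended off the axis by rotational symmetry), and identification of the leading constant as $2^\alpha\kappa_{n,\alpha}$. The only difference is presentational: the paper cites \cite[Lemma 2.2]{DydaHint} for the representation identity you derive by hand, and invokes the Loss--Sloane formula $\kappa_{n,\alpha}=\kappa_{1,\alpha}\cdot\frac12\int_{S^{n-1}}|h_n|^\alpha\,dh$ where you verify the equivalent beta-function computation directly.
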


\begin{proof}
The result follows from \cite[Lemma 2.2]{DydaHint} applied to $w=w_n$, Lemma~\ref{laplasjanupball}, 
and the following formula \cite[(7)]{LossSloane}
\[
 \kappa_{n,\alpha} = \kappa_{1,\alpha}\cdot \frac{1}{2}\int_{S^{n-1}} |h_n|^\alpha\,dh.
\]
\end{proof}

\begin{thm}\label{ballthm}
Let $1<\alpha<2$ and $n\geq 2$. There exist a constant $c=c(\alpha,n)$ such
that for every $0<r<\infty$ and $u\in C_c(B_r)$,
\begin{align}
\mathcal{E}_{B_r}(u)&:=
\frac{1}{2}
\int_{B_r} \! \int_{B_r}
\frac{(u(x)-u(y))^2}{|x-y|^{n+\alpha}} \,dx\,dy\nonumber\\
&\geq
{2^\alpha \kappa_{n,\alpha}}
\int_{B_r} {u^2(x)}
\;
r^\alpha (r^2-|x|^2)^{-\alpha}dx
+ 
c \left(\int_{B_r} {|u(x)|^{2^*}}\,dx \right) ^ {2/2^*}, \label{HSMball}
\end{align}
where $2^*=2n/(n-\alpha)$.
\end{thm}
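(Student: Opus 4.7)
The plan is to reduce (\ref{HSMball}) by scaling to the unit ball, apply Corollary~\ref{Hball} to peel off the Hardy term with exactly the required constant, and extract the Sobolev term from the remaining positive pieces via a localization and fractional Sobolev embedding. For the scaling step, substituting $u(x) = \tilde u(x/r)$ with $\tilde u \in C_c(B)$ and using $2n/2^* = n-\alpha$ shows that each of the three terms in (\ref{HSMball}) is homogeneous of degree $n-\alpha$ in $r$, so one may assume $r = 1$.

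For $r = 1$, set $v = u/w_n$ and note $w_n^2(x)(1-|x|^2)^{-\alpha+1} \equiv 1$ on $B$, so Corollary~\ref{Hball} can be rewritten as
\[
\mathcal{E}_B(u) \geq Q_w(v) + 2^\alpha \kappa_{n,\alpha} \int_B u^2(x)(1-|x|^2)^{-\alpha}\,dx + c_3 \|v\|_{L^2(B)}^2,
\]
where $Q_w(v) := \frac{1}{2}\int_B \int_B (v(x)-v(y))^2 w_n(x) w_n(y) |x-y|^{-n-\alpha}\,dx\,dy \geq 0$. The Hardy coefficient here already matches (\ref{HSMball}) exactly, so (\ref{HSMball}) for $r=1$ reduces to the weighted Sobolev inequality
\[
Q_w(v) + c_3 \|v\|_{L^2(B)}^2 \geq c\, \|u\|_{L^{2^*}(B)}^2
\]
for some $c = c(n,\alpha) > 0$.

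To establish this bound I would localize: on $B_{1/2}$ the weight $w_n$ is bounded below by $(3/4)^{(\alpha-1)/2} > 0$, so the restriction of $Q_w$ to $B_{1/2} \times B_{1/2}$ dominates a positive multiple of $\mathcal{E}_{B_{1/2}}(v)$; combining with $\|v\|_{L^2(B_{1/2})}^2$ and applying the fractional Sobolev embedding (\ref{Sobolev}) on $B_{1/2}$ controls $\|v\|_{L^{2^*}(B_{1/2})}^2$, and hence $\|u\|_{L^{2^*}(B_{1/2})}^2$ since $w_n \leq 1$. On the outer annulus $B \setminus B_{1/2}$ the weight $w_n$ degenerates, but $c_3 \|v\|_{L^2(B)}^2 = c_3 \int u^2(1-|x|^2)^{1-\alpha}\,dx$ is a softer-than-Hardy bound (since $(1-|x|^2)^{1-\alpha} \geq 1$) that, combined with interpolation between $L^2$ and $L^{2^*}$ and the cross contributions $\int_{B_{1/2}} \int_{B \setminus B_{1/2}}$ in $Q_w(v)$, should yield $\|u\|_{L^{2^*}(B \setminus B_{1/2})}^2$. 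The main obstacle is precisely this annular piece: one must derive full $L^{2^*}$-control near $\partial B$ using only the leftover positive terms, without drawing on any portion of the Hardy term since its sharp coefficient $2^\alpha \kappa_{n,\alpha}$ has already been committed; this forces careful cutoff estimates for the fractional Dirichlet form, together with the precise degeneration rate of $w_n$, to connect the inner $L^{2^*}$-bound to the global one.
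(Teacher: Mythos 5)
Your reduction is correct and matches the paper up to the crucial step: scaling to $r=1$ works (all three terms are homogeneous of degree $n-\alpha$), and Corollary~\ref{Hball} together with the identity $w_n^2(x)(1-|x|^2)^{-\alpha+1}\equiv 1$ does reduce the theorem to the weighted Sobolev inequality $Q_w(v)+c_3\|v\|_{L^2(B)}^2\geq c\,\|u\|_{L^{2^*}(B)}^2$ with $v=u/w_n$. But that inequality is exactly where the work lies, and your sketch does not prove it. The two-region split into $B_{1/2}$ and the annulus $B\setminus B_{1/2}$ merely relocates the whole difficulty into the annulus, where the weight $w_n$ still degenerates from a constant down to $0$ at $\partial B$ --- the same problem you started with, at one fewer scale. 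The tools you invoke there are insufficient: interpolation between $L^2$ and $L^{2^*}$ gives intermediate Lebesgue norms, not the endpoint $L^{2^*}$ bound you need; and the cross terms $\int_{B_{1/2}}\int_{B\setminus B_{1/2}}$ in $Q_w(v)$ carry the degenerate factor $w_n(y)$, so they do not dominate any fixed multiple of a fractional seminorm of $v$ (or of $u$) on the annulus. You correctly flag this as ``the main obstacle,'' but flagging it is not closing it.

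The paper's resolution is a genuinely infinite-scale decomposition that your sketch is missing. Writing $a_k=1-2^{-k}$ and $B_k=B(0,a_k)$, one bounds the weight from below by a layer-cake sum,
\begin{equation*}
w_n(x)w_n(y)\;\geq\;\sum_{k=1}^{\infty}\frac{\alpha-1}{2}\,2^{-k(\alpha-1)}\,1_{B_k}(x)1_{B_k}(y),
\end{equation*}
which turns $Q_w(v)+\|v\|_{L^2}^2$ into a weighted sum over $k$ of \emph{unweighted} forms $\mathcal{E}_{B_k}(v)+\|v\|_{L^2(B_k)}^2$. One then applies the Sobolev inequality (\ref{Sobolev}) on each $B_k$ with a constant uniform in $k$ (the radii lie in $[1/2,1]$), and resums using $\sum_k t_k^{\theta}\geq(\sum_k t_k)^{\theta}$ for $\theta=2/2^*\leq 1$ together with $\sum_k 2^{-2^*k(\alpha-1)/2}1_{B_k}(x)\gtrsim w_n(x)^{2^*}$, which reconstitutes $\|v\,w_n\|_{L^{2^*}}^2=\|u\|_{L^{2^*}}^2$. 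Without this dyadic decomposition (or an equivalent device handling all scales near $\partial B$ simultaneously), your argument has a genuine gap.
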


\begin{proof}
By scaling, we may and do assume that $r=1$, that is we consider only the unit ball $B=B_1\subset \R^n$.
Recall (\ref{wn}) and
let $v=u/w_n$, $a_k=1-2^{-k}$ and $B_k=B(0,a_k)$. For $x,y\in B_{k_0}$ we have
\begin{align*}
 w_n(x)w_n(y) &= w_1(|x|)w_1(|y|) \geq
  \sum_{k=k_0}^\infty (w_1^2(a_{k}) - w_1^2(a_{k+1})),
\end{align*}
thus for any $x,y\in B$
\begin{align*}
 w_n(x)w_n(y) &\geq
 \sum_{k=1}^\infty\left(\Big( \frac{3}{2} \Big)^{\alpha-1}-1 \right) 2^{-k(\alpha-1)}
1_{B_k}(x) 1_{B_k}(y)\\
&\geq \sum_{k=1}^\infty \frac{\alpha-1}{2}\, 2^{-k(\alpha-1)}1_{B_k}(x) 1_{B_k}(y).
\end{align*}
It follows that
\begin{align}
\int_B \!& \int_B 
\left(\frac{u(x)}{w_n(x)} - \frac{u(y)}{w_n(y)}\right)^2 
   \frac{w_n(x)w_n(y)}{|x-y|^{n+\alpha}} \,dx\,dy + \int_B v^2(x)w^2(x)\,dx\nonumber\\
&\geq \frac{\alpha-1}{2}
\sum_{k=1}^\infty 2^{-k(\alpha-1)} \left( \int_{B_k} \! \int_{B_k}
   \frac{(v(x)-v(y))^2 }{|x-y|^{n+\alpha}} \,dx\,dy
+ \int_{B_k} v^2(x)\,dx \right).  \label{suma}
\end{align}
We write Sobolev inequality (\ref{Sobolev}) for $D=B_k$ and a function $v$
\begin{equation}\label{SobolevBk}
 \int_{B_k} \! \int_{B_k}
   \frac{(v(x)-v(y))^2 }{|x-y|^{n+\alpha}} \,dx\,dy
+ \int_{B_k} v^2(x)\,dx  \geq c \left( \int_{B_k} |v(x)|^{2^*}\,dx \right)^{2/2^*}.
\end{equation}
The constant $c=c(\alpha,n)$ in (\ref{SobolevBk}) may be chosen such that
it does not depend on $k$, because the radii $a_k$ of $B_k$ satisfy $1/2 \leq a_k \leq 1$.
By (\ref{suma}) and (\ref{SobolevBk}) we obtain
\begin{align*}
\int_B \!& \int_B 
\left(\frac{u(x)}{w_n(x)} - \frac{u(y)}{w_n(y)}\right)^2 
   \frac{w_n(x)w_n(y)}{|x-y|^{n+\alpha}} \,dx\,dy + \int_B v^2(x)\,dx\\
&\geq 
c \sum_{k=1}^\infty  2^{-k(\alpha-1)} \left( \int_{B_k} |v(x)|^{2^*}\,dx \right)^{2/2^*}\\
&\geq 
c \left( \sum_{k=1}^\infty \int_{B_k} 2^{\frac{-2^* k(\alpha-1)}{2}} |v(x)|^{2^*}\,dx \right)^{2/2^*}\\
&\geq
c' \left(  \int_{B} w_n(x)^{2^*} |v(x)|^{2^*}\,dx \right)^{2/2^*}
= c' \left( \int_B |u(x)|^{2^*}\,dx \right)^{2/2^*}.
\end{align*}
By this and Corollary~\ref{Hball} we obtain (\ref{HSMball}).
\end{proof}

\begin{cor}\label{halfspacecor}
Let $1<\alpha<2$, $n\geq 2$ and $\Pi=\R^{n-1}\times (0,\infty)$.
 There exist a constant $c=c(\alpha,n)$ such
that for every  $u\in C_c(B_r)$
\begin{align}
\mathcal{E}_{\Pi}(u)&:=
\frac{1}{2}
\int_{\Pi} \! \int_{\Pi}
\frac{(u(x)-u(y))^2}{|x-y|^{n+\alpha}} \,dx\,dy\nonumber\\
&\geq
{ \kappa_{n,\alpha}}
\int_{\Pi} {u^2(x)} x_n^{-\alpha}\,dx
+ 
c \left(\int_{\Pi} {|u(x)|^{2^*}}\,dx \right) ^ {2/2^*}, \label{HSMhalfspace}
\end{align}
where $2^*=2n/(n-\alpha)$.
\end{cor}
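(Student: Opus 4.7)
The plan is to deduce the half-space inequality from Theorem~\ref{ballthm} by exhausting $\Pi$ with a family of inscribed balls tangent to $\partial\Pi$ at the origin. For $r>0$ let $B^{(r)}:=B_r(re_n)$, where $e_n=(0,\dots,0,1)$. A direct computation gives
\[
r^2-|x-re_n|^2 \;=\; 2rx_n - |x|^2,
\]
so $B^{(r)}=\{x\in\R^n: x_n>|x|^2/(2r)\}$. This family is increasing in $r$, and $\bigcup_{r>0}B^{(r)}=\Pi$.

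Fix $u\in C_c(\Pi)$. Since $\supp u$ is a compact subset of the open set $\Pi$, one has $\supp u\subset B^{(r)}$ for all $r$ sufficiently large, and in particular $u\in C_c(B^{(r)})$. By translation invariance of $\mathcal{E}$ and of Lebesgue measure, the inequality (\ref{HSMball}) applies to $B^{(r)}$ with weight $r^\alpha(2rx_n-|x|^2)^{-\alpha}$ in place of $r^\alpha(r^2-|x|^2)^{-\alpha}$, yielding
\[
\mathcal{E}_{B^{(r)}}(u) \;\geq\; 2^\alpha\kappa_{n,\alpha}\int_{B^{(r)}} u^2(x)\,r^\alpha(2rx_n-|x|^2)^{-\alpha}\,dx + c\Bigl(\int_{B^{(r)}} |u|^{2^*}\,dx\Bigr)^{2/2^*}.
\]

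I then pass to the limit $r\to\infty$ in each term. The Sobolev term stabilizes at $\bigl(\int_\Pi|u|^{2^*}\,dx\bigr)^{2/2^*}$ once $\supp u\subset B^{(r)}$. Since the balls $B^{(r)}$ are increasing and exhaust $\Pi$, and the integrand in $\mathcal{E}$ is nonnegative, monotone convergence gives $\mathcal{E}_{B^{(r)}}(u)\nearrow\mathcal{E}_\Pi(u)$. For the Hardy term, $\supp u$ is bounded away from $\partial\Pi$, so $x_n\geq\delta>0$ and $|x|$ is bounded there; hence
\[
r^\alpha(2rx_n-|x|^2)^{-\alpha} \;=\; \bigl(2x_n-|x|^2/r\bigr)^{-\alpha} \;\longrightarrow\; 2^{-\alpha}x_n^{-\alpha}
\]
uniformly on $\supp u$, and dominated convergence delivers convergence of the Hardy integral to $2^{-\alpha}\int_\Pi u^2x_n^{-\alpha}\,dx$. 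Since $2^\alpha\kappa_{n,\alpha}\cdot 2^{-\alpha}=\kappa_{n,\alpha}$, the constants match and (\ref{HSMhalfspace}) follows. The only step requiring any care is the domination of the Hardy weight on $\supp u$; the lower bound $x_n\geq\delta$ and the boundedness of $|x|$ there make this immediate, so no genuine obstacle arises---the whole argument is a controlled limit of the ball inequality.
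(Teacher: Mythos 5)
Your proof is correct and takes essentially the same route as the paper: both apply Theorem~\ref{ballthm} to the balls $B(re_n,r)$ inscribed in $\Pi$ and tangent at the origin, then let $r\to\infty$, using monotone convergence for the energy term. The only (cosmetic) difference is that the paper handles the Hardy weight via the pointwise inequality $\delta_{B(re_n,r)}(x)\le x_n$ for each fixed $r$, whereas you compute the limit of $r^\alpha(2rx_n-|x|^2)^{-\alpha}$; both yield the same constant $\kappa_{n,\alpha}$.
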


\begin{proof}
By Theorem~\ref{ballthm}
\[
\mathcal{E}_{B_r}(u) \geq
{\kappa_{n,\alpha}}
\int_{B_r} {u^2(x)} \delta_{B_r}(x)^{-\alpha}\,dx
+ 
c \left(\int_{B_r} {|u(x)|^{2^*}}\,dx \right) ^ {2/2^*},
\]
where $\delta_{B_r}(x) = \dist(x,B_r^c)$. Let $x_r=(0,\ldots,0,r)\in \Pi$,
by translation and inequality $\delta_{B(x_r,r)}(x) \leq x_n$ we obtain
\[
\mathcal{E}_{B(x_r,r)}(u) \geq
{\kappa_{n,\alpha}}
\int_{B(x_r,r)} {u^2(x)} x_n^{-\alpha}\,dx
+ 
c \left(\int_{B(x_r,r)} {|u(x)|^{2^*}}\,dx \right) ^ {2/2^*}.
\]
The corollary follows by letting $r\to\infty$.
\end{proof}


\def\cprime{$'$}

\end{document}